\newtheorem{theorem}{Theorem}
\newtheorem{lemma}[theorem]{Lemma}
\numberwithin{equation}{section}
\newtheorem{proposition}[theorem]{Proposition}
\newtheorem*{acknowledgements}{Acknowledgements}
\begin{document}

\title{A unified proof of Brooks' theorem and Catlin's theorem}
\author{Vaidy Sivaraman}

\address{Department of mathematical sciences, Binghamton University.}
\email{vaidy@math.binghamton.edu}

\begin{abstract}
 We give a unified proof of Brooks' theorem and Catlin's theorem. \end{abstract}

\keywords{Chromatic number, independent set, Brooks' theorem, Catlin's theorem}
\date{February 26, 2014 \\  \text{      }  \text{  } \small {2010 Mathematics Subject Classification: 05C15}}   

\maketitle

All graphs in this note are simple and finite. Let $G$ be a graph.  An $n$-coloring of $G$ is a partition of $V(G)$ into $n$ independent sets. The chromatic number of $G$ is denoted by $\chi(G)$. The independence (stability) number of $G$ is denoted by $\alpha(G)$. If $X$ is a set of vertices of $G$, then $G \backslash X$ denotes the graph obtained from $G$ by deleting the vertices in $X$. We follow the notation and terminology of West \cite{DBW}.\\

Several proofs of Brooks' theorem appear in the literature, the most famous one being \cite{LL} (also see \cite{DBW}). 
There is a strengthening of Brooks' theorem, due to Catlin \cite{PAC}, which states that every graph $G$ with maximum degree $d \geq 3$ and no clique of size $d + 1$, has a $d$-coloring in which one of the color classes has size $\alpha(G)$. \\ 

We begin by proving Catlin's result for triangle-free subcubic graphs, and the general result follows from that by induction. In addition to Catlin's original proof, several other proofs of Catlin's theorem are known (\cite{HT}, \cite{JM}, \cite{CL}). Also, our proof has some similarity to the recent one by L. Rabern (\cite{LR}), but the focus there is only on Brooks' theorem.


\begin{theorem} Let $d$ be an integer at least 3, and let $G$ be a graph with
maximum degree $d$. If $G$ does not contain $K_{d+1}$ as a subgraph, then $G$
has a $d$-coloring in which one color class has size $\alpha(G)$. In
particular, $\chi(G)\le d$. 
\end{theorem}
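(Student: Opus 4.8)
The plan is to reduce the entire statement to a single task: finding a maximum independent set whose deletion brings the chromatic number down to $d-1$. Precisely, it suffices to exhibit a maximum independent set $I$ of $G$ (so $|I| = \alpha(G)$) with $\chi(G \setminus I) \le d-1$; any $(d-1)$-coloring of $G \setminus I$ together with the class $I$ is then a $d$-coloring of $G$ in which one color class has size $\alpha(G)$, and in particular $\chi(G) \le d$. So everything comes down to choosing $I$ well.

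The first observation is that the degree bound comes for free. Since a maximum independent set is maximal, it is a dominating set; hence every vertex outside $I$ has a neighbor in $I$, and so $G \setminus I$ has maximum degree at most $d-1$. I would then run an induction on $d$, with the triangle-free subcubic case as the base. For the inductive step ($d \ge 4$) the induction hypothesis is exactly the theorem for $d-1$, and it applies to $G \setminus I$ provided $G \setminus I$ contains no $K_d$. Note that in a graph of maximum degree at most $d-1$ any copy of $K_d$ is automatically a connected component, since each of its vertices already uses all $d-1$ permitted edges inside the clique; thus ``no $K_d$ subgraph'' and ``no $K_d$ component'' coincide here.

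The crux is therefore to choose $I$ so that $G \setminus I$ has no $K_d$ component. I would analyze such a component $Q \cong K_d$: each vertex of $Q$ has $d-1$ neighbors inside $Q$, and, lying outside the dominating set $I$, has exactly one further neighbor, which is in $I$. Using that $G$ has no $K_{d+1}$, I would perform a local swap, exchanging a vertex of $Q$ for its unique neighbor in $I$, producing another maximum independent set that destroys this $K_d$ component. To make this work globally I would select, among all maximum independent sets, one minimizing the number of $K_d$ components of $G \setminus I$, and show the swap strictly decreases this quantity, forcing it to be zero.

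For the base case $d = 3$ the set $G \setminus I$ has maximum degree at most $2$, hence is a union of paths and cycles, and is $2$-colorable exactly when it has no odd cycle. In the triangle-free subcubic case the components $C_3$ are already excluded, and the longer odd cycles $C_5, C_7, \dots$ are ruled out by the same swap philosophy applied along an offending cycle; the general subcubic $K_4$-free case is then reduced to this one by treating triangles directly, exploiting that in a subcubic graph each vertex of a triangle has at most one neighbor outside it. The main obstacle throughout is precisely this perturbation argument: showing that the extremal choice of $I$ admits no bad component, i.e. that a swap removing one $K_d$ (or one odd cycle) can always be carried out without creating new bad components and without dropping $|I|$ below $\alpha(G)$.
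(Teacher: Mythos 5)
Your high-level plan is sound and close in spirit to the paper's: fix a maximum independent set $I$ (automatically dominating, so $G\setminus I$ has maximum degree at most $d-1$), arrange that $G\setminus I$ has no $K_d$, and induct. But there is a genuine gap exactly where you flag ``the main obstacle'': the perturbation argument is not carried out, and the single swap you describe fails as stated. If $Q$ is a $K_d$ component of $G\setminus I$, $v\in Q$, and $a$ is the unique neighbor of $v$ in $I$, then exchanging $v$ for $a$ destroys $Q$, but it moves $a$ into $G\setminus I$, and nothing prevents $a$ from having $d-1$ pairwise adjacent neighbors there, creating a fresh $K_d$ component (this does not contradict $K_{d+1}$-freeness, since $v$ has left $G\setminus I$). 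The count of bad components then stays the same and minimality yields no contradiction. The fix --- which is the actual content of the paper's base case (and of its commented-out general lemma) --- is to iterate the swap along a \emph{maximal} alternating path $P_0$ starting at $v$, alternating between non-isolated vertices of $G\setminus I$ and vertices of $I$ while keeping the non-$I$ vertices independent, set $I'=I\,\triangle\,V(P_0)$, and then prove that no cycle (or clique) of $G\setminus I'$ meets $I\cap V(P_0)$, so every bad component of $G\setminus I'$ was already one of $G\setminus I$ and the count strictly drops. Verifying that $I'$ is independent and still maximum uses the degree bound in a nontrivial way; none of this is automatic.

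A second gap: reducing the subcubic $K_4$-free case with triangles to the triangle-free case by ``treating triangles directly'' is not a proof, and in the paper this is a separate induction step on $|V(G)|$ (your induction on $d$ alone has no smaller instance to appeal to here): delete the $d$ vertices of a $K_d$, add an edge between two of the external neighbors $a_i$ so that they cannot all receive the same color, color the smaller graph by induction, extend to the clique via Hall's theorem, and then run a case analysis (does every maximum independent set of the smaller graph contain all the $a_i$?) to certify that the largest color class still has size $\alpha(G)$. That last accounting, distinguishing $\alpha(G)=1+\alpha(G')$ from $\alpha(G)=\alpha(G')$, is needed for the full Catlin statement rather than merely $\chi(G)\le d$, and your proposal does not address it. In short, the skeleton is right, but the two steps that carry all the difficulty are missing.
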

\begin{proof} Our proof is by induction on $|V(G)|$. We consider induction
steps when either $d\ge 4$ or $G$ contains a copy of $K_d$. Thus, our base case
is when $d=3$ and $G$ contains no triangles. 

{\bf Base case.} Choose an independent set $I$ of size $\alpha(G)$ such that
the number of odd cycles in $G\setminus I$ is minimum. Suppose that
$G\setminus I$ contains an odd cycle $C$. Choose $v\in V(C)$. Consider the
set $S$ of all paths $P$ starting at $v$ and alternating between non-isolated
vertices of $G\setminus I$ and vertices of $I$, subject to $V(G\setminus I)\cap
V(P)$ being independent. Let $P_0$ be a member of $S$ of maximum length, and
let $I'$ be the symmetric difference of $I$ and $V(P_0)$. Note that $I'$ is
independent. This is because $d=3$, $v$ has a neighbor in $I\cap V(P)$ and two
neighbors in $G\setminus I$, and every other vertex of $I'\setminus I$ has two
neighbors in $I\cap V(P)$ and one neighbor in $G\setminus I$. Since $I$ is
maximum and $P$ starts outside $I$, we have $|I'|=|I|=\alpha(G)$.

Note that no cycle in $G\setminus I'$ contains a vertex of $I\cap V(P_0)$.
This is because (by construction) each vertex of $I$ in the interior of $P_0$
has two neighbors in $I'$, and the final vertex of $P_0$ has each neighbor
either in $I'$ or isolated in $G\setminus I$. This means that every odd cycle
in $G\setminus I'$ is an odd cycle in $G\setminus I$. Since $C$ is an odd
cycle of $G\setminus I$ that is not in $G\setminus I'$, the number of odd
cycles in $G\setminus I'$ is strictly less than that in $G\setminus I$, a
contradiction. Hence $G\setminus I$ contains no odd cycle; therefore
$G\setminus I$ is bipartite and can be 2-colored. Coloring $I$ with color 3
gives the desired 3-coloring with color class 3 of size $\alpha(G)$.

{\bf Induction step.} Suppose first that $d\ge 4$ and $G$ contains no copy of
$K_d$. Let $I$ be a maximum independent set. Now $G\setminus I$ has maximum
degree at most $d-1$, so by induction, $G\setminus I$ has a $(d-1)$-coloring.
Using color $d$ on $I$ gives the desired coloring.

Suppose instead that $G$ contains a copy of $K_d$. Let $\{v_1, \ldots, v_d\}$ be a set of pairwise
adjacent vertices. For each $v_i$, let $a_i$ be its neighbor outside
$\{v_1,\ldots,v_d\}$ and let $A=\{a_i : 1\le i\le d\}$ (the case where some
$v_i$ has no such neighbor $a_i$ is easier, and we consider it below). The
$a_i$ need not be distinct, but cannot all be equal, since $G$ does not contain
$K_{d+1}$. Let $G'=G\setminus\{v_1,\ldots,v_d\}$. Our plan is to color $G'$
by hypothesis, then extend the coloring to $G$. To do so, we must ensure that
the $a_i$ do not all receive the same color.

First suppose that $G'$ has a maximum independent set $I$ not containing all of
$A$. Form $G''$ from $G'$ by adding some edge $a_ia_j$ where $a_i$ is not in
$I$. By induction, we have a $d$-coloring of $G''$ with some color class of
size $\alpha(G'')$. We can easily extend this coloring to the desired
$d$-coloring of $G$. For example, we can apply Hall's Theorem to a bipartite graph with one
part consisting of $\{v_1,\ldots,v_d\}$, the other part consisting of colors
$\{1,\ldots,d\}$, and each vertex $v_i$ adjacent to all colors not used on
$a_i$. The largest color class in this $d$-coloring of $G$ has size
$1+\alpha(G'')=1+\alpha(G')=\alpha(G)$; the final equality holds because every
independent set in $G$ contains at most one vertex of $\{v_1,\ldots,v_d\}$.

If instead every maximum independent set of $G'$ contains all of $A$, then form
$G''$ from $G'$ by adding an arbitrary edge $a_ia_j$. Again, we apply the
induction hypothesis to $G''$ and extend the $d$-coloring of $G''$ to $G$ by
Hall's Theorem. Now the largest color class has size
$1+\alpha(G'')=\alpha(G')=\alpha(G)$.

The case where some $v_i$ has no neighbor $a_i$ is easier. After $d$-coloring
$G'$ by induction, extending the coloring is simple, since $v_i$ can receive
any color. \end{proof}

\begin{acknowledgements}
I am grateful to Prof. Douglas West who carefully read this short note and suggested several improvements in the presentation. 
Also, I would like to thank the anonymous referee who completely rewrote the proof and it is his/her write-up that is presented here.\end{acknowledgements}

\end{document}